\title{On Seshadri constants of non-simple abelian varieties}
\author{Rikito Ohta}
\address{Department of Mathematics,
Graduate School of Science,
Osaka University,
Machikaneyama 1-1,
Toyonaka,
Osaka,
560-0043,
Japan.
}
\email{r-ohta@cr.math.sci.osaka-u.ac.jp}
\date{\today}
\newcommand{\mult}{\operatorname{mult}}
\begin{document}
\maketitle

\begin{abstract}
Let $(A,L)$ be a polarized abelian variety of dimension $n$.
We prove two results for the Seshadri constants of abelian varieties.
The first one is that if the Seshadri constant $\epsilon(A,L)$ is relatively small with respect to $L^n$,
there exists a proper abelian subvariety $B$ such that $\epsilon(A,L)$ is equal to 
$\epsilon(B,L|_B)$.
Second, we prove that
if there exists a codimension one abelian subvariety $D$ satisfying certain numerical conditions, then $\epsilon(D,L|_D)$ is equal to $\epsilon(A,L)$.
As an application of these results, we investigate the structure of low dimensional polarized abelian varieties whose Seshadri constants is sufficiently small with respect to $L^n$.

 
\end{abstract}


%

\section{Introduction}\label{sc:intro}
Let $X$ be a projective variety of dimension $n$ over $\bC$, the field of complex numbers.
For an ample line bundle $L$ on $X$ and a point $x \in X$, 
Demailly introduced an invariant called the \emph{Seshadri constant} in \cite{MR1178721} as
\begin{align}\label{eq:seshadri_constant}
\varepsilon(X,L;x) \coloneqq 
\inf_{x \in C} \{ \varepsilon_{C,x}(L) \},
\end{align}
where we denote
\begin{align}
\varepsilon_{C,x}(L) \coloneqq \frac {L.C}{\mult_x C}
\end{align}
for a curve $C$ containing $x$.
Note that the definition immediately implies that $\varepsilon(X,L;x)$ is determined by the numerical class of $L$.

It is closely related to various geometric notions such as separation of jets, very ampleness of adjoint bundles, and the symplectic packing problem. We recommend \cite[Chapter 5]{MR2095471} for details.
%
For a polarized abelian variety $(A,L)$,
the Seshadri constant $\varepsilon(A,L;x)$ does not depend on the choice of a point $x \in A$, and hence we denote it by $\varepsilon(A,L)$.
Moreover, we often denote $\varepsilon_{C,0}(L)$ by $\varepsilon_{C}(L)$.
An important problem is to understand how $\varepsilon(A,L)$ reflects the global structure of
$(A,L)$.

For example, 
it is known that there exists a close relationship
between the Seshadri constants and minimal period lengths (also called the Bauer-Sarnak invariants) of polarized complex tori.
For details, see \cite{MR1406008}, \cite{MR1660259} or \cite[Chapter 5]{MR2095471}.
Apart from this, Nakamaye \cite{MR1393263} proved the very interesting following result.

\begin{theorem}[{$=$\cite[Theorem $1.1$]{MR1393263}}]\label{th:nakamaye}
Let $(A,L)$ be a polarized abelian variety of dimension $n$.
Then  $ \varepsilon(A,L) \ge 1 $.
 Moreover,  $ \varepsilon(A,L) =1$ if and only if 
 $(A,L)$ is isomorphic to $(E, L_1)  \times (B, L_2)$,
where $L_1$ is a line bundle of degree 1 on an elliptic curve $E$ and
$(B,L_2)$ is a polarized abelian variety of dimension $n-1$.
\end{theorem}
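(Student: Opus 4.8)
The plan is to prove both statements simultaneously by induction on $n=\dim A$, always computing $\varepsilon(A,L)$ at the origin, so that $\varepsilon(A,L)=\inf_C\big((L\cdot C)/\mult_0 C\big)$, the infimum running over irreducible curves $C$ through $0$. The first move is a reduction: given such a $C$, the numbers $\mult_0 C$ and $L\cdot C=(L|_{\langle C\rangle})\cdot C$ are both computed inside the abelian subvariety $\langle C\rangle\subseteq A$ generated by $C$, so if $\langle C\rangle$ is a proper subvariety of $A$ the ratio is at least $\varepsilon(\langle C\rangle,L|_{\langle C\rangle})\ge 1$ by induction. Hence it suffices to bound $(L\cdot C)/\mult_0 C$ from below for curves $C$ that generate $A$; the case $n=1$ is trivial, since then $C=A$, $\mult_0 A=1$, and $L\cdot A=\deg L\ge1$, with equality exactly when $\deg L=1$.

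\textbf{The inequality.} I claim that if $C$ generates $A$ with $n\ge2$ and $m:=\mult_0 C$, then in fact $L\cdot C>m$ (the strict form is needed for the classification). For $n=2$ this is clean: since $C$ generates $A$ it is not a translate of an elliptic subgroup, so $e:=C^2>0$; the normalization $\widetilde C\to C$ induces a surjection from $\mathrm{Jac}(\widetilde C)$ onto $A$, so $g(\widetilde C)\ge2$; adjunction gives $p_a(C)=1+e/2$, the point of multiplicity $m$ forces $p_a(C)-g(\widetilde C)\ge\binom m2$, whence $e\ge m^2-m+2$; and the Hodge index theorem yields $(L\cdot C)^2\ge L^2\cdot C^2\ge2e\ge2m^2-2m+4>m^2$. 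For $n\ge3$ I would reduce to the surface case in the spirit of Nakamaye: a curve $C\ni0$ with $L\cdot C<m$ may be translated over all of $A$, producing an $n$-dimensional family of curves of $L$-degree $<m$, each carrying a point of multiplicity $m$, and the total space of this family together with its ``multiple-point section'' is incompatible with the positivity of $L$. I expect the passage from $n=2$ to arbitrary $n$ to be the main obstacle: one must either make this translation argument precise, or cut $A$ by $n-2$ general members of $|dL\otimes\mathcal I_C|$ for $d\gg0$ and run a Hodge-index argument on a resolution of the resulting (generally singular) surface containing $C$, which requires controlling that resolution and the self-intersection of the strict transform of $C$.

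\textbf{The classification, ``only if''.} Suppose $\varepsilon(A,L)=1$ and $n\ge2$. First, this infimum is attained by an irreducible curve through $0$. Take $C_k\ni0$ with $(L\cdot C_k)/\mult_0 C_k\to1$. If some $C_k$ generates a $1$-dimensional subvariety, it is an elliptic subgroup $E$ with $\deg(L|_E)$ a positive integer close to $1$, hence $\deg(L|_E)=1$ and we are done. Otherwise every $C_k$ generates a subvariety of dimension $\ge2$, where the strict inequality above applies; since it bounds the ratio away from $1$ as $\mult_0 C_k$ grows, the integers $\mult_0 C_k$ and $L\cdot C_k$ must stay bounded, so the $C_k$ lie, up to translation, in finitely many bounded families, and a Chow limit $C_\infty\ni0$ has $(L\cdot C_\infty)/\mult_0 C_\infty\le1$ by upper semicontinuity of multiplicity in families of $1$-cycles; an irreducible component of $C_\infty$ through $0$ then realizes the value $1$. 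Let $C$ be such a computing curve: $L\cdot C=\mult_0 C$, and by the first reduction $C$ generates $\langle C\rangle=:B$ with $(L|_B)\cdot C=\mult_0 C$. If $\dim B\ge2$ this contradicts the strict inequality; hence $\dim B=1$, i.e.\ $C$ is an elliptic subgroup $E\subseteq A$ and $\deg(L|_E)=\mult_0 E=1$.

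\textbf{Splitting off $E$, and the converse.} Given an elliptic subgroup $\iota_E\colon E\hookrightarrow A$ with $\deg(L|_E)=1$, the polarization $L|_E$ is principal, so $\phi_{L|_E}\colon E\to\widehat E$ is an isomorphism and the norm endomorphism $N_E:=\iota_E\circ\phi_{L|_E}^{-1}\circ\widehat{\iota_E}\circ\phi_L$ is idempotent; hence $A=E\times B$ with $B:=\ker N_E$. Writing $\phi_L$ in block form for this decomposition, the identity $\widehat{\iota_E}\circ\phi_L\circ\iota_E=\phi_{L|_E}$ together with the fact that $B$ is exactly $\ker N_E$ forces the two off-diagonal blocks to vanish (the second one by the symmetry $\widehat{\phi_L}=\phi_L$), so $\phi_L=\phi_{L|_E}\oplus\phi_{L|_B}$; equivalently $L$ is numerically equivalent to $p_E^*(L|_E)\otimes p_B^*(L|_B)$, i.e.\ $(A,L)\cong(E,L_1)\times(B,L_2)$ with $\deg L_1=1$ and $\dim B=n-1$ (a point when $n=1$). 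Conversely, for such a product the curve $E\times\{0\}$ satisfies $L\cdot(E\times\{0\})=\deg L_1=1=\mult_0(E\times\{0\})$, so $\varepsilon(A,L)\le1$; combined with the inequality, $\varepsilon(A,L)=1$.
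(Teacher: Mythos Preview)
The paper does not contain a proof of this theorem: it is quoted verbatim from Nakamaye's article \cite{MR1393263} as background, so there is no in-paper argument to compare your attempt against.

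On its own merits, your write-up has a genuine gap, and you name it yourself. The entire scheme rests on the strict inequality $L\cdot C>\mult_0 C$ for curves $C$ that generate $A$ when $\dim A\ge 2$. Your surface argument is correct and even gives a uniform bound (from $(L\cdot C)^2\ge 2(m^2-m+2)$ one gets $(L\cdot C)/m\ge\sqrt{7}/2$), but for $n\ge 3$ you only \emph{describe} two strategies---a translation/positivity argument ``in the spirit of Nakamaye'' and a cut-down-to-a-surface Hodge-index argument---without carrying either out. That step is precisely the substance of Nakamaye's theorem; in his paper it is handled by a delicate construction of singular divisors in $|kL|$ and a restriction/multiplicity argument, not by a routine reduction to surfaces. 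Until that step is supplied, neither the inequality nor the ``only if'' direction is established for $n\ge 3$.

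A secondary point: in your ``only if'' paragraph, the boundedness of $\mult_0 C_k$ and $L\cdot C_k$ requires not just the strict inequality but a \emph{uniform} lower bound for $(L\cdot C)/\mult_0 C$ over all curves generating a subvariety of dimension $\ge 2$; you have this for $n=2$ from the explicit estimate, but for higher-dimensional generating subvarieties it again hinges on the missing step. The Chow-limit passage (upper semicontinuity of multiplicity, then passing to an irreducible component through $0$) also deserves a line of justification, since the limit cycle need not be irreducible and one must check that some component through $0$ still has ratio $\le 1$. The remaining parts---the $n=1$ base case, the splitting via the norm endomorphism when $\deg(L|_E)=1$, and the converse---are fine.
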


In the same paper, Nakamaye also proved \cite[Lemma $3.3$]{MR1393263}, which says that
if there exists a curve on $A$ such that 
\begin{align}
\varepsilon_{C} (L) < \frac{\sqrt[n]{L^n}}{n},
\end{align}
then $C$ is a contained in a proper abelian subvariety of $A$.



In view of \cite[Lemma $3.3$]{MR1393263}, it is natural to ask wether
there exists a proper abelian subvariety $B$
which computes the Seshadri constant (i.e., $ \varepsilon(A,L) = \varepsilon(B,L|_B)$) under the assumption of \cite[Lemma $3.3$]{MR1393263}.
Some abelian varieties admit infinitely many abelian subvarieties. If A is one of them, then for a sequence of curves
$\{0 \in C_n\}_n$ 
such that 
$ \{ \varepsilon_{C_n} (L) \}_n $ 
converges to $\varepsilon(A,L)$, it is not clear whether we can take a subsequence such that all $C_n$ are contained in the same proper abelian subvariety of $A$.
Our first result gives an affirmative answer to this question.

\begin{theorem}\label{th:low_seshadri}
Assume that 
\begin{align}\label{eq:main_assumption}
\varepsilon (A,L) < \frac{\sqrt[n]{L^n}}{n}.
\end{align}
Then there exists a proper abelian subvariety $B$ of $A$ such that 
$ \varepsilon(A,L) = \varepsilon(B,L|_B)$.
\end{theorem}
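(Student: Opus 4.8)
The plan is to realize $\varepsilon(A,L)$ as an infimum of Seshadri ratios of curves through the origin, to apply \pref{lm:nakamaye} twice --- once on $A$ to confine each such curve to a proper abelian subvariety, and once on that subvariety to bound its degree --- and then to invoke finiteness of abelian subvarieties of bounded degree to single out one that computes $\varepsilon(A,L)$.

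First recall that $\varepsilon(A,L)=\inf_{C}\varepsilon_{C,0}(L)$, the infimum over irreducible reduced curves $C\ni 0$, and that $\varepsilon(B,L|_B)\ge\varepsilon(A,L)$ for every abelian subvariety $B\subseteq A$, since a curve in $B$ is a curve in $A$ with the same Seshadri ratio. Choose irreducible reduced curves $C_m\ni 0$ with $\varepsilon_{C_m,0}(L)\to\varepsilon(A,L)$; by \eqref{eq:main_assumption} we may discard finitely many terms and assume $\varepsilon_{C_m,0}(L)<\frac{\sqrt[n]{L^n}}{n}$ for all $m$. By \pref{lm:nakamaye} each $C_m$ lies in a proper abelian subvariety, so let $B_m\subsetneq A$ be the abelian subvariety generated by $C_m$ and set $d_m=\dim B_m\in\{1,\dots,n-1\}$.

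The crucial step is a uniform bound on $(L|_{B_m})^{d_m}$. Since $C_m$ generates $B_m$, it lies in no proper abelian subvariety of $B_m$, so the contrapositive of \pref{lm:nakamaye} applied to the polarized abelian variety $(B_m,L|_{B_m})$ and the curve $C_m$ yields $\varepsilon_{C_m,0}(L|_{B_m})\ge\frac{\sqrt[d_m]{(L|_{B_m})^{d_m}}}{d_m}$; as $C_m\subset B_m$ this ratio equals $\varepsilon_{C_m,0}(L)$, so
\[
\frac{\sqrt[d_m]{(L|_{B_m})^{d_m}}}{d_m}\le\varepsilon_{C_m,0}(L)<\frac{\sqrt[n]{L^n}}{n},
\qquad\text{hence}\qquad
(L|_{B_m})^{d_m}<\Bigl(\frac{d_m}{n}\Bigr)^{d_m}(L^n)^{d_m/n}\le L^n,
\]
a bound independent of $m$ (using $0<d_m<n$ and $L^n\ge 1$). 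Now I would use the standard fact that a fixed polarized abelian variety has only finitely many abelian subvarieties $B$ with $(L|_B)^{\dim B}$ below a given constant --- provable by a Minkowski successive-minima estimate, since the covolume of the period lattice of $B$ with respect to the Hermitian form of $L$ is controlled by $(L|_B)^{\dim B}$, which forces $B$ to have a lattice basis consisting of short period vectors of $A$, of which there are finitely many. Hence $\{B_m\}_m$ is finite, so some abelian subvariety $B$ satisfies $B=B_m$ for infinitely many $m$; along that subsequence $\varepsilon(B,L|_B)\le\varepsilon_{C_m,0}(L)\to\varepsilon(A,L)$, which together with $\varepsilon(B,L|_B)\ge\varepsilon(A,L)$ gives $\varepsilon(B,L|_B)=\varepsilon(A,L)$ with $B$ a proper abelian subvariety, as required.

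I expect the main obstacle to be exactly this degree bound together with the finiteness it enables: once the $B_m$ are trapped in a finite family everything follows, and this is where hypothesis \eqref{eq:main_assumption} is indispensable (without it the $B_m$ could have arbitrarily large degree and, as the introduction observes, no single subvariety need compute the constant). Points to watch: the approximating curves $C_m$ must be chosen irreducible and reduced (standard for Seshadri constants); $B_m$ is taken to be the \emph{smallest} abelian subvariety containing $C_m$, so that $C_m$ genuinely generates it and the reverse application of \pref{lm:nakamaye} on $B_m$ is licit; and $(B_m,L|_{B_m})$ is a polarized abelian variety because $L|_{B_m}$ remains ample.
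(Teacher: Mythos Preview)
Your proof is correct and rests on the same two pillars as the paper's --- the double application of \pref{lm:nakamaye} (once on $A$, once on the abelian subvariety generated by the curve) to bound the degree of that subvariety, and a finiteness statement for abelian subvarieties of bounded degree --- but the packaging differs. The paper runs a downward induction on the dimension $i$ of the generated subvariety: it defines $a_i$ essentially as the minimum of $\varepsilon(B,L|_B)$ over $i$-dimensional subvarieties $B$ of suitably bounded degree, and shows that either $\varepsilon(A,L)$ coincides with one of these minima or every curve with small Seshadri ratio must generate a subvariety of dimension strictly below $i$. You bypass the induction entirely by passing to an approximating sequence $C_m$ and extracting a constant subsequence from the generated subvarieties $\{B_m\}$.

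The one substantive difference is the finiteness input. The paper proves only (and needs only) that the set of \emph{values} $\{\varepsilon(B,L|_B):B\in S^L_{k,r}\}$ is finite, deducing this from the finiteness of isomorphism classes of abelian subvarieties together with the geometric finiteness of polarizations of bounded degree up to automorphism (\pref{lm:finiteness}, \pref{cr:finiteness}). You invoke the stronger statement that the set of subvarieties $B$ themselves is finite, justified by your lattice/Minkowski sketch. That stronger fact is true and the sketch is sound, but observe that your argument would go through with the paper's weaker finiteness as well: since $C_m\subset B_m$ gives $\varepsilon(B_m,L|_{B_m})\le\varepsilon_{C_m,0}(L)$ for every $m$, the finite set of values $\{\varepsilon(B_m,L|_{B_m})\}_m$ has a minimum $\le\varepsilon(A,L)$, and any $B_{m_0}$ realizing that minimum does the job. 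Your route is more direct; the paper's route trades the lattice digression for off-the-shelf finiteness theorems.
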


We prove \pref{th:low_seshadri} by combining \cite[Lemma $3.3$]{MR1393263} and 
some finiteness theorems for abelian varieties.

Apart from the above, we also prove the following theorem.
\begin{theorem}\label{th:main2}
Let $(A,L)$ be a polarized abelian variety of dimension $n$.
Fix a positive real number $a$.
Let
$D$ be an abelian divisor in $A$
such that
\begin{align}\label{eq:introassumption}
\sqrt[n]{L^n} \ge \sqrt[n-1]{a (L|_D)^{n-1} } .
\end{align}
If 
$\varepsilon (A,L) < a \sqrt[n]{L^n} / n $ holds,
then
$\varepsilon (A,L)$ = $\varepsilon (D,L|_D)$.
Moreover, if one can take $a \ge (\sqrt[n]{n})^{n-1}$, the upper bound
$\varepsilon (A,L) < a \sqrt[n]{L^n} / n $ automatically holds.
\end{theorem}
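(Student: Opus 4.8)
The plan is to reduce the statement to a single intersection inequality for curves not contained in $D$, and then feed in the numerical hypothesis.

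After translating $D$, which affects none of $L^{n}$, $(L|_D)^{n-1}$, $\varepsilon(A,L)$ or $\varepsilon(D,L|_D)$, I may assume $0\in D$. Every irreducible reduced curve contained in $D$ and through $0$ is such a curve in $A$ as well, with the same values of $L\cdot C$ and $\mult_0 C$; since both Seshadri constants are infima of $L\cdot C/\mult_0 C$ over such curves, this already gives $\varepsilon(A,L)\le\varepsilon(D,L|_D)$. Suppose for contradiction that the inequality is strict. Pick irreducible reduced curves $C_k\ni 0$ with $L\cdot C_k/\mult_0 C_k\to\varepsilon(A,L)$; for $k\gg 0$ one has
\[
\frac{L\cdot C_k}{\mult_0 C_k}<\min\Bigl\{\varepsilon(D,L|_D),\ \tfrac{a\sqrt[n]{L^{n}}}{n}\Bigr\},
\]
and the first bound forces $C_k\not\subset D$. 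A direct manipulation shows the hypothesis $\sqrt[n]{L^{n}}\ge\sqrt[n-1]{a(L|_D)^{n-1}}$ is equivalent to $L^{n}\ge a\sqrt[n]{L^{n}}\,(L|_D)^{n-1}$, i.e.\ $L^{n}/\bigl(n(L|_D)^{n-1}\bigr)\ge a\sqrt[n]{L^{n}}/n$. Hence it suffices to prove the following claim: \emph{every irreducible reduced curve $C$ through $0$ with $C\not\subset D$ satisfies $L\cdot C/\mult_0 C\ge L^{n}/\bigl(n(L|_D)^{n-1}\bigr)$.} Applying this to $C_k$ contradicts the displayed inequality, so $\varepsilon(A,L)=\varepsilon(D,L|_D)$.

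For the claim, let $\nu\colon\widetilde{C}\to C$ be the normalization and $\iota\colon C\hookrightarrow A$ the inclusion. Since $\dim D=n-1$ and $C\not\subset D$, we have $C+D=A$, so the addition map $\sigma\colon\widetilde{C}\times D\to A$, $(c,d)\mapsto\iota(\nu(c))+d$, is surjective and generically finite; put $\delta=\deg\sigma$. First, $\delta\ge\mult_0 C$: the fibre $\sigma^{-1}(0)$ is finite and contains a point over each branch of $C$ at $0$, and in local coordinates on $A$ with $D=\{x_n=0\}$ the local multiplicity of $\sigma$ there is at least the multiplicity of that branch; as $\widetilde{C}\times D$ is smooth, $\sigma$ is flat over $0$, so $\delta$ equals the sum of these local multiplicities, hence $\delta\ge\mult_0 C$. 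Second, put $\Gamma=\sigma^{*}L\otimes p_1^{*}(\iota\nu)^{*}L^{-1}\otimes p_2^{*}(L|_D)^{-1}$, where $p_1,p_2$ are the projections of $\widetilde{C}\times D$. By algebraic equivalence of translates, $\Gamma$ restricts to an algebraically trivial line bundle on every slice $\widetilde{C}\times\{d\}$ and on every slice $\{c\}\times D$; hence $c_1(\Gamma)\in H^{1}(\widetilde{C})\otimes H^{1}(D)$ and $c_1(\Gamma)^{j}=0$ for $j\ge 3$. Expanding $(\sigma^{*}L)^{n}=\bigl(p_1^{*}(\iota\nu)^{*}L+p_2^{*}(L|_D)+\Gamma\bigr)^{n}$ and using this together with $\bigl(p_1^{*}(\iota\nu)^{*}L\bigr)^{2}=0$ and $\bigl(p_2^{*}(L|_D)\bigr)^{n}=0$, only two terms survive:
\[
\delta\,L^{n}=(\sigma^{*}L)^{n}=n\,(L\cdot C)\,(L|_D)^{n-1}+\binom{n}{2}\bigl(p_2^{*}(L|_D)\bigr)^{n-2}\!\cdot\Gamma^{2}.
\]
Finally, cutting $D$ with $n-2$ general members of a very ample multiple of $L|_D$ realises $\bigl(p_2^{*}(L|_D)\bigr)^{n-2}\cdot\Gamma^{2}$, up to a positive factor, as $(\Gamma|_{\widetilde{C}\times Y})^{2}$ on a product surface $\widetilde{C}\times Y$ with $Y$ a smooth connected curve; there $\Gamma$ is numerically trivial on both rulings, which span a hyperbolic plane in the N\'eron--Severi group, so by the Hodge index theorem $(\Gamma|_{\widetilde{C}\times Y})^{2}\le 0$. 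Combining the last three facts, $\mult_0 C\cdot L^{n}\le\delta\,L^{n}\le n\,(L\cdot C)\,(L|_D)^{n-1}$, which is the claim.

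The real work is the claim, and inside it the intersection computation — pinning down exactly which monomials in $(\sigma^{*}L)^{n}$ survive, and proving $\delta\ge\mult_0 C$ with careful bookkeeping of the branches of $C$ under normalization (I expect the multiplicity bookkeeping to be the fussiest point). Granting the expansion, $(\Gamma|_{\widetilde{C}\times Y})^{2}\le 0$ is a routine use of Hodge index and the cutting-down reduction is standard.
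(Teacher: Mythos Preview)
Your argument is correct, and both you and the paper reduce to the same core inequality: for every curve $C\ni 0$ with $C\not\subset D$ one has $\varepsilon_C(L)\ge L^n/\bigl(n\,(L|_D)^{n-1}\bigr)$. But the routes to this inequality are quite different. The paper notices that an abelian divisor satisfies $D^2\equiv 0$ numerically (two distinct translates of $D$ are disjoint), so $(uL-D)^n=L^n u^n-n(L|_D)^{n-1}u^{n-1}$ and the nef threshold is exactly $\sigma(L,D)=L^n/\bigl(n(L|_D)^{n-1}\bigr)$; nefness of $L-\sigma(L,D)D$ then gives $L\cdot C\ge\sigma(L,D)\,(D\cdot C)$, and the local estimate $D\cdot C\ge\mult_0 C$ for $C\not\subset D$ finishes in two lines. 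Your addition-map computation, Künneth decomposition, and Hodge-index step recover the very same inequality $L\cdot C\ge \bigl(L^n/(n(L|_D)^{n-1})\bigr)(D\cdot C)$ --- indeed your $\delta$ equals $D\cdot C$ (the fibre of $\sigma$ over a general point has exactly $D\cdot C$ reduced points) --- so what you have done is effectively an ad hoc proof, curve by curve, that $L-\sigma(L,D)D$ is nef. The paper's $D^2=0$ shortcut makes all of this unnecessary and is worth internalising; your method, on the other hand, is the standard Pontryagin-product technique for Seshadri bounds on abelian varieties and would survive in settings where no such clean nef-threshold formula is available.

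One small wrinkle: your justification ``as $\widetilde{C}\times D$ is smooth, $\sigma$ is flat over $0$'' is not quite enough as stated---smoothness of the source alone does not force flatness. What makes it work here is that \emph{every} fibre of $\sigma$ is finite (the fibre over $a$ identifies with $(\iota\nu)^{-1}(a+D)$, and $C\not\subset a+D$ for all $a$ since either $a+D=D$ or $0\notin a+D$); hence $\sigma$ is finite, and then miracle flatness applies. With that addition the argument is complete.
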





We prove \pref{th:low_seshadri} and \pref{th:main2} in \pref{sc:proof}.

We discuss various applications of \pref{th:low_seshadri} and \pref{th:main2} in
\pref{sc:applications}.  We first show that the proof of \pref{th:main2} induces an interesting relationship between the set of Seshadri curves and the set of abelian divisors satisfying \pref{eq:introassumption}
(see \pref{pr:main2} and \pref{eq:containing} for details).
For abelian surfaces, we obtain the following corollaries.

\begin{corollary}[{$ =$ \pref{pr:submaximal2}}]
Let $(S,L)$ be a polarized abelian surface.
Assume that there exists a curve $C \ni 0$ such that
\begin{align}\label{eq:intro2dim}
  \varepsilon_{C} (S,L) < {\sqrt{\frac {L^2}{2}}}.
\end{align}
Then $C$ is elliptic, and it is the unique curve satisfying \pref{eq:intro2dim} and containing $0 \in S$.
\end{corollary}

\pref{th:low_seshadri} and \pref{th:main2} mentioned above mean that the computation of 
the Seshadri constant of an abelian variety can be reduced to that of its abelian subvariety in some cases.
There exist many results which compute the Seshadri constants on 
concrete low-dimensional abelian varieties.
\cite{MR1612681} and \cite{MR1974680} (respectively, \cite{MR1810121} and \cite{MR2112583}) handle the case of the Theta divisors on Jacobian varieties of curves
(resp. principally polarized abelian varieties).
The Seshadri constants of abelian surfaces have been studied in further detail.
For example, it is known that the Seshadri constants of abelian surfaces are rational
(for more results, see Appendix of \cite{MR1660259}, \cite{MR1660260}, \cite{intsehadri}, etc). In the latter part of \pref{sc:applications}, keeping these previous works in mind, we give some elementary applications of our theorems.
In particular, we show the following theorem.

\begin{corollary}\label{cr:main3}
Assume
$L^3 \le 174$ 
and
$  \varepsilon (A,L) < \sqrt[3]{L^3} /3 $.
Then $ \varepsilon (A,L)= 1$ or $4/3$.
Moreover, if $\varepsilon (A,L) = 4/3$, 
$A$ contains the Jacobian variety $J$ of a genus two curve such that
any curve satisfying $\varepsilon_C (L) < 21 \sqrt[3]{L^3} / 8$ is contained in $J$
and $A \simeq J \times E$ for some elliptic curve.
If $ L^3 \le 60$, we obtain $\varepsilon (A,L)= 1$.
\end{corollary}
Note that the assumption  $L^3 \le 60$ is optimal for $\varepsilon (A,L)= 1 $.
In fact, for any $n \in 6 \bZ $ satisfying $n > 60$,
we can construct examples of $(A,L)$ satisfying $\varepsilon(A,L) < \sqrt[3]{L^3}/ 3$, $L^3 =n$, and $\varepsilon(A,L) \neq 1$ (see \pref{eg:3-dim} for this).

\subsection*{Acknowledgements}
The author would like to appreciate his advisor Shinnosuke Okawa for a lot of useful comments and warm encouragement.

\subsection*{Notation and conventions}
The ground field is $\bC$.
A \emph{polarized abelian variety} is a pair $(A,L)$ of an abelian variety $A$ (i.e., smooth projective group scheme over $\bC$) and an ample line bundle $L$ on $A$. 
The identity of $A$ is denoted by $0 \in A$.
An irreducible closed subvariety $B \subset A$ is an \emph{abelian subvariety} of $A$ if $B$ is a group subscheme of $A$ by the inclusion.
An abelian subvariety of $A$ of codimension one is called an \emph{abelian divisor} of $A$.
A \emph{curve} is a projective and integral scheme over $\bC$ of dimension one.
We say a curve in $A$ generates an abelian subvariety $B$ if $B$ is the minimal abelian 
subvariety containing the curve.

\section{Proof of the theorems}\label{sc:proof}
In this section, we prove
\pref{th:low_seshadri} and \pref{th:main2}.

\subsection{Proof of \pref{th:low_seshadri}}
The key ingredient of the proof is \pref{lm:finiteness}, which asserts the existence of the minimal element in 
the set of the Seshadri constants of polarized abelian subvarieties of $(A,L)$  of bounded degree.
We begin with some preparation.
\begin{definition}\label{df:quotient_set}
Let $A$ be an abelian variety of dimension $n$.
We define the following sets.

\begin{enumerate}

\item
For a fixed abelian variety $B$ of dimension $k$,
\begin{align}\label{eq:def_S}
S_{B,r} 
\coloneqq \{ L' \mid \text{ $L'$ is an ample line bundle on B satisfying ${L'} ^k < r$ }\} / \sim,
\end{align}
where $L_1 \sim L_2$ if there exists an automorphism $f$ of $B$ such that
$[f^* L_1] = [L_2] $ in $\operatorname{NS} (B)$.

\item
For an ample line bundle $L$ on $A$,
\begin{align}\label{eq:finite_set}
{S}_{k,r}^L \coloneqq
\{B \mid  \text{$B$ is an abelian subvariety of $A$ of dimension $k$ such that 
$({L|_B)}^k  < r$ } \}.
\end{align}

\end{enumerate}

Consider the following maps.
\begin{align}
\varepsilon \colon {S}_{k,r}^L \to \bR_{\ge 0};  \ B \mapsto \varepsilon (B, L|_B).
\end{align}
Then we define the following sets.
\begin{align}\label{eq:kakiku}
E_{k,r}^L \coloneqq \operatorname{Im}(\varepsilon)
= \{\varepsilon(B,L|_B) \in \bR_{>0} \mid  B \in {S}_{k,r}^L \}.
\end{align}

\end{definition}

\begin{lemma}\label{lm:finiteness}
Let $L$ be an ample line bundle on $A$.
Then
$E_{k,r}^L $ is a finite set for any $1 \le k \le n $ and $r$.
\end{lemma}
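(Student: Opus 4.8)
The plan is to deduce the finiteness of $S_{k,r}$ from two classical finiteness statements for abelian varieties. By construction a class in $S_{k,r}$ depends only on the isomorphism class of the polarized abelian variety $(B,L')$ (up to numerical equivalence of $L'$), so it suffices to bound separately (i) the number of isomorphism classes of abelian varieties $B$ of dimension $k$ that occur as an abelian subvariety of $A$, and (ii) for each such $B$, the number of ample classes in $\operatorname{NS}(B)$ with $(L')^k<r$, modulo the action of $\operatorname{Aut}(B)$. Granting (i) and (ii), if $B_1,\dots,B_N$ are representatives of the finitely many isomorphism classes in (i), then $S_{k,r}$ injects into $\bigsqcup_{j=1}^{N}S_{B_j,r}$ (send $[(B,L')]$ to the $\operatorname{Aut}(B_j)$-class of $L'$ transported along an isomorphism $B\cong B_j$), and each $S_{B_j,r}$ is finite by (ii); hence $S_{k,r}$ is finite.

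For (ii) I would first observe that $(L')^k=k!\,\chi(L')$, so the hypothesis $(L')^k<r$ bounds $\chi(L')$ and therefore confines the type $(d_1,\dots,d_k)$ of the polarization $L'$ (with $d_1\mid d_2\mid\cdots\mid d_k$) to a finite set. For each admissible type, the finiteness theorem of Narasimhan--Nori on polarizations of bounded degree up to automorphism shows that $B$ carries only finitely many such polarizations modulo $\operatorname{Aut}(B)$; summing over the finitely many types gives that $S_{B,r}$ is finite.

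Statement (i) is where the real work lies. By Poincar\'e's complete reducibility theorem $A$ is isogenous to a product $\prod_i A_i^{n_i}$ of powers of pairwise non-isogenous simple abelian varieties, so there are only finitely many isogeny classes of $k$-dimensional abelian subvarieties of $A$ (the isogeny class of such a $B$ is determined by the multiplicities with which the simple factors appear in $H_1(B)\otimes\bQ$). Within a fixed isogeny class, an abelian subvariety $B\subseteq A$ is determined by the sub-$\bQ$-Hodge structure $H_1(B)\otimes\bQ\subseteq H_1(A)\otimes\bQ$ together with the induced lattice $H_1(B)=(H_1(B)\otimes\bQ)\cap H_1(A)$, and the isomorphism class of $B$ is that of this $\bZ$-lattice with its complex structure. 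Finiteness of the possible isomorphism classes then follows from reduction theory, i.e.\ from the Jordan--Zassenhaus theorem applied to the order $\operatorname{End}(A)$ in the semisimple $\bQ$-algebra $\operatorname{End}(A)\otimes\bQ$ (in the CM case this is literally finiteness of the class number). Alternatively, (i) may simply be quoted as the known fact that a fixed abelian variety contains, up to isomorphism, only finitely many abelian subvarieties of a given dimension.

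I expect the main obstacle to be step (i): one must use that the embedding hypothesis $B\subseteq A$ --- equivalently, primitivity of $H_1(B)$ in $H_1(A)$ --- is exactly what excludes the infinitely many pairwise non-isomorphic abelian varieties that are merely \emph{isogenous} to a given one, and one must arrange the reduction-theory input so that ``bounded dimension of $B$'' translates into ``bounded rank of the associated $\operatorname{End}(A)$-lattice.'' Step (ii), by contrast, is essentially a direct appeal to the Narasimhan--Nori finiteness theorem once the polarization type has been seen to be bounded.
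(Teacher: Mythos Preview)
Your approach is correct and is essentially the same as the paper's: both reduce the finiteness of $S_{k,r}$ to (i) the finiteness of isomorphism classes of $k$-dimensional abelian subvarieties of $A$, and (ii) the finiteness of ample classes of bounded degree in $\operatorname{NS}(B)$ modulo $\operatorname{Aut}(B)$, and then inject $S_{k,r}$ into the disjoint union $\bigsqcup_j S_{B_j,r}$. The only difference is that the paper simply quotes both facts from the literature (the result of Lenstra--Oort--Zarhin for (i), and the geometric finiteness theorem as in Milne's notes for (ii)), whereas you sketch a possible proof of (i) via Poincar\'e reducibility plus Jordan--Zassenhaus before noting, correctly, that one may just cite the known result.
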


\begin{proof}

Assume ${S}_{k,r}^L \neq \emptyset $. By \cite[Theorem]{MR1378542}, there exist only finitely many isomorphism classes of abelian subvarieties of $A$ of dimension $k$.
Let $B_1, B_2 \dots , B_t$ be representatives.
Then we obtain the following map.
\begin{align}
\alpha \colon S_{k,r} ^ L  \to \bigcup_{i=1} ^ {t} S_{B_i,r} ; \ \ 
B \mapsto [\varphi^* _{B} (L|_{B})],
\end{align}
where $\varphi _ B \colon B_i \to B $ is an isomorphism for some $B_i$.
By the definition of $S_{B_i,r}$ (see \pref{eq:def_S}),
it follows that
the following map \pref{eq:additional map} is well-defined.
\begin{align}\label{eq:additional map}
\tilde{\varepsilon}: \bigcup_{i=1} ^ {t} S_{B_i,r} \to \bR_{\ge 0};
\ \  [M] \in S_{B_i,r} \mapsto \varepsilon(B_i, M).
\end{align}
Then we can see that $\tilde{\varepsilon} \circ \alpha = \varepsilon$
since 
\begin{align}
\tilde{\varepsilon} \circ \alpha (B) 
= \varepsilon (B_i, \varphi_B ^* (L|_B))
= \varepsilon (B, L|_B)
= \varepsilon (B).
\end{align}
Hence it is sufficient to show that $S_{B,r}$ is a finite set for a fixed $k$-dimensional abelian variety $B$ to prove $E_{k,r}^L  = \varepsilon({S}_{k,r}^L)$ is finite.

However, this follows from the geometric finiteness theorem (for example, see \cite[Theorem 18.1]{MR861974}), which says that there exist only finitely many classes of ample line bundles of fixed degree in $ \operatorname{NS} (A)$ up to the action of the group of automorphisms of $A$.
\end{proof}

%

Now we are ready to prove \pref{th:low_seshadri}.

\begin{proof}[Proof of \pref{th:low_seshadri}]

By [Nak96,Lemma3.3] and the assumption \pref{eq:main_assumption}, $A$ has a proper abelian subvariety. 
Let $k$ be the maximal dimension of the proper abelian subvarieties of $A$.
For each natural number $1 \le i \le k $, 
we write
\begin{align}
r_i \coloneqq \lb \frac{i \sqrt[n]{L^n}}{n}\rb ^i .
\end{align}
Let 
\begin{align}
 a_{k+1} \coloneqq \frac{\sqrt[n]{L^n}}{n}.
\end{align}
For each $1 \le i \le k$,
starting with $i=k$,
we define $a_i$ inductively as
$
 a_{i} \coloneqq \min \{ \min (E_ {i,r _ {i}} ^ L ), a_{i+1} \}
$,
where $E_{k,r_i}^L$ is defined in \pref{eq:kakiku}.
Obviously, the definition of $a_i$ implies
\begin{align}\label{eq:seq}
 \varepsilon(A,L) \le a_1 \le a_2 \le \cdots \le a_k \le a_{k+1} = \frac{\sqrt[n]{L^n}}{n}.
\end{align}
Now for the proof of the theorem, consider the following conditions for each $i$.
\begin{enumerate}
\item[$(1^{i})$]
$\varepsilon(A,L) < a_i$, and
any curve $C$ satisfying
$
\varepsilon(A,L) \le \varepsilon_{C} (L)
< a_{i} 
$
generates an abelian subvariety of dimension at most $(i-1)$.

\item[$(2^i)$]
$\varepsilon(A,L) = \varepsilon(B,L|_B)$ for some $i$-dimensional abelian subvariety $B$.
\end{enumerate}
Note that our assumption \pref{eq:main_assumption} and \cite[Lemma $3.3$]{MR1393263} imply $(1^{k+1})$.
To prove the theorem, it is sufficient to show that there exists some $i$, for which the condition $(2^i)$ holds.
However, this follows from the following \pref{cl:induction}.
\end{proof}
\begin{claim}\label{cl:induction}
Under the above notation,
$(1^i)$ implies either $(1^{i-1})$ or $(2^{i-1})$ for any $2 \le i \le k+1 $.
\end{claim}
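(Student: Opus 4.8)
The plan is to prove the claim by a direct case analysis on whether the Seshadri constant $\varepsilon(A,L)$ is realized, at the $(i-1)$-level, by some curve lying in a proper abelian subvariety of small dimension. First I would assume $(1^i)$, so that $\varepsilon(A,L) < a_i$ and every curve $C$ with $\varepsilon(A,L) \le \varepsilon_C(L) < a_i$ generates an abelian subvariety of dimension $\le i-1$. Since $a_{i-1} \le a_i$ by \eqref{eq:seq}, any curve with $\varepsilon(A,L) \le \varepsilon_C(L) < a_{i-1}$ also generates an abelian subvariety of dimension $\le i-1$; the content of $(1^{i-1})$ is the sharper bound "dimension $\le i-2$", and the content of $(2^{i-1})$ is that the constant is actually computed on an $(i-1)$-dimensional subvariety. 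So the dichotomy I want is: either no curve with $\varepsilon_C(L)$ close to $\varepsilon(A,L)$ generates a full $(i-1)$-dimensional subvariety (giving $(1^{i-1})$ after choosing $a_{i-1}$ appropriately), or some such curve does, and then one leverages finiteness to conclude $(2^{i-1})$.

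The key step is the following. Consider the set of curves $C$ through $0$ with $\varepsilon(A,L) \le \varepsilon_C(L) < a_i$ that generate an abelian subvariety $B$ of dimension exactly $i-1$. For such a $C$ and its generated subvariety $B$, we have $(L|_B)^{i-1} \le$ (some explicit bound): indeed $C \subset B$ and $\varepsilon(B, L|_B) \le \varepsilon_C(L) < a_i \le a_{k+1} = \sqrt[n]{L^n}/n$, and comparing with the upper bound $\varepsilon(B,L|_B) \le \sqrt[i-1]{(L|_B)^{i-1}}$ from \eqref{eq:upper} applied on $B$, combined with the definition $r_{i-1} = (\,(i-1)\sqrt[n]{L^n}/n\,)^{i-1}$, forces $(L|_B)^{i-1} < r_{i-1}$, i.e. $B \in S_{i-1, r_{i-1}}^L$. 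Hence every such $B$ contributes its Seshadri constant to the finite set $E_{i-1, r_{i-1}}^L$, and $\min(E_{i-1,r_{i-1}}^L)$ is attained (Corollary \ref{cr:finiteness}); this minimum is, by construction, $\ge a_{i-1}$, and it equals $a_{i-1}$ precisely when the min over $E_{i-1,r_{i-1}}^L$ is the binding one in the definition of $a_{i-1}$.

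From here the dichotomy resolves cleanly. If $a_{i-1} = a_i$ (the case $\min(E_{i-1,r_{i-1}}^L) \ge a_i$), then every curve with $\varepsilon(A,L) \le \varepsilon_C(L) < a_{i-1} = a_i$ that generated an $(i-1)$-dimensional subvariety $B$ would give $\varepsilon(B,L|_B) \le \varepsilon_C(L) < a_{i-1} \le \min(E_{i-1,r_{i-1}}^L)$, contradicting $B \in S_{i-1,r_{i-1}}^L$; so no such curve generates a full $(i-1)$-dimensional subvariety, all such curves generate subvarieties of dimension $\le i-2$, and since still $\varepsilon(A,L) < a_{i-1}$, this is exactly $(1^{i-1})$. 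If instead $a_{i-1} = \min(E_{i-1,r_{i-1}}^L) < a_i$, then there is an $(i-1)$-dimensional $B \in S_{i-1,r_{i-1}}^L$ with $\varepsilon(B, L|_B) = a_{i-1}$; and since $\varepsilon(A,L) \le \varepsilon(B,L|_B) = a_{i-1}$ while every curve $C \subset B$ has $\varepsilon_C(L) \ge \varepsilon(A,L)$ — and $\varepsilon(A,L) \ge a_{i-1}$ would follow once we know $\varepsilon(A,L)$ cannot be strictly smaller than $a_{i-1}$, which is exactly the lower reach guaranteed by $(1^i)$ (any curve beating $a_{i-1}$ lives in dimension $\le i-2$, and then induction down the chain pins it) — we get $\varepsilon(A,L) = a_{i-1} = \varepsilon(B,L|_B)$, which is $(2^{i-1})$.

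The main obstacle I expect is the bookkeeping in the second case: showing that $(1^i)$ really does prevent $\varepsilon(A,L)$ from being strictly less than $a_{i-1}$, i.e. that if there were a curve $C$ with $\varepsilon_C(L) < a_{i-1}$ then pushing it into the lower-dimensional subvariety it generates (dimension $\le i-2$) and iterating does not let the constant escape below $a_{i-1}$. This requires carefully using that the $a_j$ were defined via the minima of the $E_{j, r_j}^L$ for all $j < i$ and that $r_j$ was chosen so that a subvariety hosting a submaximal-enough curve automatically lands in $S_{j,r_j}^L$; the chain \eqref{eq:seq} of inequalities among the $a_j$ then closes the argument. The geometric input is light — it is entirely \pref{lm:nakamaye}-style "small Seshadri ratio forces descent into a proper abelian subvariety" together with the finiteness packaged in Corollary \ref{cr:finiteness}; the delicacy is purely in threading the definitions.
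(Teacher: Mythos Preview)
There is a genuine gap in your ``key step.'' You claim that if a curve $C$ with $\varepsilon_C(L) < a_i$ generates an abelian subvariety $B$ of dimension $i-1$, then necessarily $B \in S^L_{i-1,r_{i-1}}$, and you justify this by combining $\varepsilon(B,L|_B) < \sqrt[n]{L^n}/n$ with the upper bound $\varepsilon(B,L|_B) \le \sqrt[i-1]{(L|_B)^{i-1}}$. But that second inequality points the wrong way: it bounds $\varepsilon(B,L|_B)$ from above, not below, so the two together say nothing about the size of $(L|_B)^{i-1}$. A priori $(L|_B)^{i-1}$ could be arbitrarily large while $\varepsilon(B,L|_B)$ is small. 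The paper handles exactly this by a case split: if $(L|_B)^{i-1} \ge r_{i-1}$, then $\sqrt[i-1]{(L|_B)^{i-1}}/(i-1) \ge \sqrt[n]{L^n}/n > \varepsilon_C(L)$, and now \pref{lm:nakamaye} applied \emph{inside $B$} forces $C$ into a proper abelian subvariety of $B$, contradicting that $C$ generates $B$. This use of Nakamaye's lemma on $B$ is the missing geometric ingredient; you mention the lemma at the end as ``light geometric input'' but never actually invoke it where it is needed.

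A second, structural issue: your dichotomy is on whether $a_{i-1} = a_i$ or $a_{i-1} < a_i$, whereas the paper splits on whether $\varepsilon(A,L)$ equals $\min(E^L_{i-1,r_{i-1}})$ or is strictly smaller. The paper's split is cleaner: equality gives $(2^{i-1})$ immediately, and strict inequality gives $\varepsilon(A,L) < a_{i-1}$, after which one argues $(1^{i-1})$ by the contradiction above. Your Case 2, by contrast, requires proving $\varepsilon(A,L) = a_{i-1}$ outright; you flag this as the ``main obstacle'' and gesture at ``induction down the chain,'' but $(1^i)$ alone does not supply a lower bound $\varepsilon(A,L) \ge a_{i-1}$, and no such argument is given. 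Once you fix the key step via Nakamaye as above, the paper's dichotomy avoids this difficulty entirely.
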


\begin{proof}[Proof of the Claim]
Assume that $(1^{i})$ holds.
Note that it follows that
$\varepsilon(A,L) \le \min (E_ {{i-1},r _ {i-1}}^L)$
by the definition of the Seshadri constant.
The equality implies the condition $(2^{i-1})$,
so let us assume that the inequality is strict.
In this case there exists a curve $C$ satisfying 
\begin{align}\label{eq:ttt}
\varepsilon(A,L) \le \varepsilon_{C} (L) < a_{i-1}.
\end{align}
If all the curves satisfying \pref{eq:ttt} generate abelian subvarieties of 
dimension at most $i-2$, then this implies $(1^{i-1})$, so that the proof is done.
Note that we already know that $\dim B \le i-1$ by $(1^i)$.
Hence, for the contradiction, suppose that there exists a curve $C$ which satisfies \pref{eq:ttt} and generates an abelian subvariety $B$ of dimension $i-1$.

First assume that 
\begin{align}
 \frac{\sqrt[n]{L^n}}{n} \le \frac {\sqrt[i-1]{(L|_B) ^{i-1}}} {i-1}.
\end{align}
In this case, we obtain that 
\begin{align}
\varepsilon_{C}(L) < \frac{\sqrt[n]{L^n}}{n} \le \frac {\sqrt[i-1]{(L|_B) ^{i-1}}} {i-1},
\end{align}
where the first inequality follows from \pref{eq:seq}.
Then \cite[Lemma $3.3$]{MR1393263} implies that $C$ is contained in a proper abelian subvariety of $B$. However, this contradicts that $C$ generates $B$.

Hence, let us assume that 
\begin{align}\label{eq:rr}
\frac{\sqrt[n]{L^n}}{n} > \frac {\sqrt[i-1]{(L|_B) ^{i-1}}} {i-1}.
\end{align}
Note that \pref{eq:rr} holds if and only if $B \in {S}_{i-1,r_{i-1}}^L$ by the definition \pref{eq:finite_set}.
Then we obtain the inequality 
\begin{align}
  \min ({E}_{i-1,r_{i-1}} ^ L) \le \varepsilon_{C}(L).
\end{align}
However, this contradicts our assumption 
$\varepsilon_{C}(L) < a_{i-1} \le \min ({E}_{i-1,r_{i-1}} ^ L)$.
Hence $C$ can not generate an $i-1$ dimensional abelian subvariety and this concludes the proof.
\end{proof}

\subsection{Proof of \pref{th:main2}}
In this subsection, 
we give the proof of \pref{th:main2}.
First, we define the nef threshold of a divisor $D$ on an $n$-dimensional polarized abelian variety $(A,L)$ as
\begin{align}
\sigma(L,D) \coloneqq \sup \{t\in \bR \mid  \text{$L-tD$ is ample}\} \in 
\bR_{>0} \cup \{ \infty \}.
\end{align}
The following lemma is crucial for the proof of \pref{th:main2}.
\begin{lemma}\label{lm:ampleness}
Let $D$ be an abelian divisor of $A$.
Then $ \sigma(L,D) (L |_D) ^{n-1} = L^n /n $.
\end{lemma}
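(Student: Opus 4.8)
The plan is to reduce the identity to the single statement that $L-\sigma(L,D)D$ has vanishing top self-intersection, and then to read off that vanishing from the fact that, on an abelian variety, a nef line bundle with positive top self-intersection is ample.

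\emph{Step 1 (reduction).} Since $D$ is an abelian divisor, the quotient $E\coloneqq A/D$ is an elliptic curve, and writing $\pi\colon A\to E$ for the quotient homomorphism we have, as Cartier divisors, $D=\pi^{*}(0)$; hence $[D]=\pi^{*}[0]$ and, since $[0]^{2}=0$ on the curve $E$, we get $D^{2}=0$ (equivalently, the fibres $\pi^{-1}(e)$, $e\in E$, form a family of pairwise disjoint divisors sweeping out $A$). Therefore, for every $t\in\bR$,
\begin{align}
(L-tD)^{n}=L^{n}-nt\,(L^{n-1}.D)=L^{n}-nt\,(L|_{D})^{n-1},
\end{align}
where the last equality is the projection formula for $D\hookrightarrow A$. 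Consequently, writing $\sigma\coloneqq\sigma(L,D)$, the asserted identity $\sigma\,(L|_{D})^{n-1}=L^{n}/n$ is equivalent to $(L-\sigma D)^{n}=0$.

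\emph{Step 2 (locating $\sigma$ on the boundary of the ample cone).} If $L-tD$ is ample then $(L-tD)^{n}>0$, so by the displayed formula $t<L^{n}/\bigl(n(L|_{D})^{n-1}\bigr)$; in particular $\sigma$ is finite, and $\sigma>0$ since the ample cone is open and $L$ is ample. The set $\{t\in\bR\mid L-tD\text{ is ample}\}$ is the intersection of the affine line $\{L-tD\}_{t}$ with the open convex ample cone, hence an open interval with (finite) right endpoint $\sigma$. Thus $M\coloneqq L-\sigma D$ is a limit of ample classes, so $M$ is nef, but $M$ is not ample.

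\emph{Step 3 (conclusion).} On an abelian variety a nef line bundle with positive top self-intersection is ample: the Hermitian form attached to a nef numerical class is positive semidefinite, and a nondegenerate positive semidefinite form is positive definite (see e.g.\ \cite{MR861974}). Since $M$ is nef and not ample, this forces $M^{n}=0$, i.e.\ $(L-\sigma D)^{n}=0$; combining with Step 1 gives $\sigma(L,D)(L|_{D})^{n-1}=L^{n}/n$.

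The only non-elementary inputs are the characterization of ampleness among nef line bundles on an abelian variety used in Step 3, and the observation in Step 1 that $D$ is literally a fibre of $A\to A/D$, which gives $D^{2}=0$; everything else is the projection formula and the openness of the ample cone, so I do not anticipate a serious obstacle.
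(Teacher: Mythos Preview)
Your proof is correct. Both you and the paper begin by establishing $D^{2}=0$: the paper observes that a translate $D+x$ with $x\notin D$ is disjoint from $D$, while you realize $D$ as the fibre of the quotient $\pi\colon A\to A/D$ over $0$; these are the same argument. From there the approaches diverge slightly. The paper invokes an external result (\cite[Proposition~1.1]{MR2390296}) identifying $\sigma(L,D)^{-1}$ with the maximal root of $\chi(uL-D)=\frac{1}{n!}(uL-D)^{n}$, which by $D^{2}=0$ has the single nonzero root $n(L|_{D})^{n-1}/L^{n}$. You instead argue directly that $M=L-\sigma D$ lies on the boundary of the ample cone, hence is nef but not ample, and conclude $M^{n}=0$ from the criterion that on an abelian variety a nef class with positive top self-intersection is ample. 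Your argument is more self-contained, and in fact essentially reproves the special case of the cited proposition that is needed here; the paper in addition sketches an alternative proof via Okounkov bodies.
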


\begin{proof}
For any $x \in A \setminus D$,
it follows that
$(D+x) \cap (D) = \phi$.
Hence we obtain 
$D^2 = 0 $ in the Chow ring $A^2 (A)$
since $(D+x)$ and $D$ are in the same numerically class.
By \cite[Proposition 1.1]{MR2390296}, 
$\sigma(L,D)$ is the multiplicative inverse of the maximal root of the polynomial
\begin{align}
\chi (uL-M) = \frac{1}{n!} (uL - D) ^ n
=\frac{1}{n!} (L^n u^n - n(L|_D )^{n-1} u^{n -1})
\in \bQ [ u ]  .
\end{align}
Then the straightforward computation implies the assertion.
\end{proof}

\begin{remark}
We can also prove this lemma by applying the methods of the Okounkov body.
For details, see the proof of \cite[Corollary 4.12]{Lozovanu:aa}. 

\end{remark}

Now we obtain \pref{th:main2} from \pref{pr:main2} and \pref{lm:main2} below.

\begin{proposition}\label{pr:main2}
Let 
$ D $
be an abelian divisor of $A$.
Suppose that 
\begin{align}\label{eq:main2prop}
  \sqrt[n]{L^n} > (\text{respectively,}  \ge) \sqrt[n-1]{a (L|_D)^{n-1} } 
\end{align}
for a positive real number $a$.
Then any curve $C$ satisfying
\begin{align}\label{eq:aaa}
\varepsilon_{C} (L) \le (resp. <) \frac{a \sqrt[n]{L^n}}{n}
\end{align}
is contained in $D$.
\end{proposition}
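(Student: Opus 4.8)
The plan is to reduce the whole statement to the value of the nef threshold $\sigma(L,D)$ — which \pref{lm:ampleness} computes explicitly — and then to a short manipulation of radicals. I would argue by contraposition: I show that a curve $C$ not contained in $D$ must satisfy $\varepsilon_C(L) \ge \sigma(L,D)$, and that the hypothesis \pref{eq:main2prop} says precisely $\sigma(L,D) \ge (\text{resp. } >)\, a\sqrt[n]{L^n}/n$.

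First I would record that, by \pref{lm:ampleness}, $\sigma(L,D)$ is the finite positive number $L^n/\bigl(n(L|_D)^{n-1}\bigr)$; in particular $L - \sigma(L,D)D$ lies in the closure of the ample cone, hence is nef. Now let $C$ be a curve with $C \not\subseteq D$. Note $0 \in C$ (since $\varepsilon_C(L) = \varepsilon_{C,0}(L)$ presupposes $C$ passes through $0$) and $0 \in D$ (since $D$ is an abelian subvariety, so contains the identity and is smooth there). Because $C \not\subseteq D$, the intersection $C \cap D$ is zero-dimensional, so
\begin{align}
D.C \;\ge\; (D.C)_0 \;\ge\; \mult_0 D \cdot \mult_0 C \;=\; \mult_0 C,
\end{align}
where $(D.C)_0$ is the local intersection multiplicity at $0$, the middle step is the standard lower bound for local intersection multiplicities, and the last equality uses smoothness of $D$ at $0$. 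Pairing the nef class $L - \sigma(L,D)D$ with $C$ gives $L.C \ge \sigma(L,D)\, D.C \ge \sigma(L,D)\,\mult_0 C$, so $\varepsilon_C(L) = L.C/\mult_0 C \ge \sigma(L,D)$.

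It then remains to unwind \pref{eq:main2prop}. Raising the positive inequality $\sqrt[n]{L^n} \ge (\text{resp. } >)\, \sqrt[n-1]{a(L|_D)^{n-1}}$ to the $(n-1)$-st power gives $\bigl(\sqrt[n]{L^n}\bigr)^{n-1} \ge (\text{resp. } >)\, a(L|_D)^{n-1}$, and multiplying both sides by the positive quantity $\sqrt[n]{L^n}/\bigl(n(L|_D)^{n-1}\bigr)$ turns the left-hand side into $L^n/\bigl(n(L|_D)^{n-1}\bigr) = \sigma(L,D)$ and the right-hand side into $a\sqrt[n]{L^n}/n$. Combining with $\varepsilon_C(L) \ge \sigma(L,D)$ yields $\varepsilon_C(L) \ge (\text{resp. } >)\, a\sqrt[n]{L^n}/n$, contradicting \pref{eq:aaa}. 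Hence $C \subseteq D$. (From here \pref{th:main2} is immediate: since $\varepsilon(A,L)$ is the infimum of the $\varepsilon_C(L)$ and, under its hypotheses, can be approximated by curves with $\varepsilon_C(L) < a\sqrt[n]{L^n}/n$, all such curves lie in $D$, and for $C \subseteq D$ one has $\varepsilon_C(L) = \varepsilon_{C,0}(L|_D)$; thus $\varepsilon(A,L) = \varepsilon(D,L|_D)$.)

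I do not anticipate a serious difficulty. The only points that need a little care are the local intersection-multiplicity bound $D.C \ge \mult_0 C$ for $C \not\subseteq D$ — which genuinely relies on $D$ being smooth at the origin — and the fact that the boundary class $L - \sigma(L,D)D$ is still nef, which is just closedness of the nef cone. The remainder is bookkeeping with $n$-th and $(n-1)$-st roots, together with the input of \pref{lm:ampleness}.
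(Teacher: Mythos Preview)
Your argument is correct and is essentially the paper's own proof: both compute $\sigma(L,D)$ via \pref{lm:ampleness}, pair a suitable ample/nef class $L-tD$ against $C$, and use the local bound $D.C\ge\mult_0 C$ for $C\not\subseteq D$ to reach a contradiction. The only cosmetic difference is that the paper pairs with $L-\tfrac{a\sqrt[n]{L^n}}{n}D$ directly, whereas you pair with the boundary class $L-\sigma(L,D)D$ and then compare $\sigma(L,D)$ to $a\sqrt[n]{L^n}/n$; also, you have interchanged the ``main'' and ``respectively'' cases when transcribing \pref{eq:main2prop}, so be careful that your $\varepsilon_C(L)\ge$ is matched against the strict inequality $\varepsilon_C(L)<$ in \pref{eq:aaa} (and $>$ against $\le$), or the contradiction in one branch does not close.
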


\begin{proof}
By \pref{lm:ampleness} and the assumption \pref{eq:main2prop}, we obtain that
\begin{align}
\sigma(L,D) > (\text{resp}. \ge)  \frac{a \sqrt[n]{L^n}}{n}.
\end{align}
This implies that 
$L - \frac{(a \sqrt[n] {L^n})}{n} D $
is ample (resp. nef). Hence it follows that
\begin{align}\label{eq:ppp}
L.C > (\text{resp}. \ge)   \frac {a \sqrt[n]{L^n} (D.C)}{n}.
\end{align}
Then, by the assumption \pref{eq:aaa} and \pref{eq:ppp}, we have
\begin{align}\label{eq:xx}
\frac{a\sqrt[n]{L^n}}{n}   \ge (\text{resp}. >)   \frac{L.C}{\mult_0(C)}
                          > (\text{resp}. \ge) \frac{a \sqrt[n]{L^n} (D.C)}{n \mult_0(C)},
\end{align}
so that
\begin{align}
1 > \frac {D.C}{\mult_0 C}.
\end{align}
Now for a contradiction, we assume that $C$ is not contained in $D$.
Then we obtain 
\begin{align}\label{eq:xy}
D.C \ge \mult_0(C) \mult_0 (D) = \mult_0(C),
\end{align}
a contradiction.
\end{proof}

\begin{lemma}\label{lm:main2}
Let $X$ be a smooth variety of dimension $n$, and $D$ be a divisor containing a point
$x \in X$.
Assume
\begin{align}
\sqrt[n]{L^n} \ge \sqrt[n-1]{a (L|_D)^{n-1} }
\end{align}
 for $a > \sqrt[n] {\frac {{n}^{n-1}} {\mult_x(D)}}  $.
 Then we have the upper bound
$\varepsilon (X,L;x) < a \sqrt[n]{L^n} / n $.
\end{lemma}
\begin{proof}
Suppose that 
$\varepsilon (X,L;x) \ge a \sqrt[n]{L^n} / n $.
Then we obtain the following inequality;
\begin{align}
 \frac{a \sqrt[n-1]{a (L|_D)^{n-1} }} {n} 
 \le   \frac{a \sqrt[n]{L^n}}{ n} 
 \le \varepsilon (X,L;x) 
\le \sqrt[n-1]{\frac {(L|_D)^{n-1}}{\mult_x (D)}}.
\end{align}
This contradicts to the assumption that $a > \sqrt[n] {\frac {{n}^{n-1}} {\mult_x(D)}}  $.
\end{proof}

\section{Applications}\label{sc:applications}
In this section, we give some applications of our theorems.
First, we show some results about the uniqueness of Seshadri curves by applying \pref{pr:main2}.

Let $(A,L)$ be a polarized abelian variety of dimension $n$.
For any $a \in \bR_{>0}$,
we denote the set of all curves satisfying
\begin{align}\label{eq:nounou}
\varepsilon_{C} (L)  < \frac{a \sqrt[n]{L^n}} {n}
\end{align}
 by $\cC_{a}$.
Moreover, we define 
\begin{align*}
\cD _{a} \coloneqq 
\{D \subset A \mid \text{D is an abelian divisor satisfying the following \pref{eq:moumou}}\}.
\end{align*}
\begin{align}\label{eq:moumou}
\sqrt[n]{L^n}  \ge \sqrt[n-1]{a (L|_D)^{n-1} }.
\end{align}

Then, by \pref{pr:main2}, it follows that
\begin{align}\label{eq:containing}
\bigcup_{C \in \cC_{a} } C \subset \bigcap _{D \in \cD _{a}} D.
\end{align}
This observation implies the following \pref{pr:submaximal2} and \pref{pr:sumaximal3}.

\begin{proposition}\label{pr:submaximal2}
Let $(S,L)$ be a polarized abelian surface.
Assume that there exists a curve $C \ni 0 $ such that
\begin{align}\label{eq:intro2dima}
  \varepsilon_C (L) < {\sqrt{\frac {L^2}{2}}} .
\end{align}
Then $C$ is elliptic and it is the unique curve satisfying \pref{eq:intro2dima} and containing $0 \in S$.
\end{proposition}
 \begin{proof}
 The definition of the Seshadri constant implies that 
 $ \varepsilon (S,L) < {\sqrt{\frac {L^2}{2}}} $.
 Then, by \pref{lm:surface} below, there exists an elliptic curve $C_0$ such that 
 $ \varepsilon_{C_0} (L) = \varepsilon (S,L) $.
 However, applying \pref{pr:main2} as $a = \sqrt{2}$, we conclude that $C_0$ is the unique curve satisfies \pref{eq:intro2dima}.
 \end{proof}

\begin{remark}\label{rm:submaximal}
If $\sqrt{L^2}$ is irrational,
it is already known that there are at most only a finitely many submaximal curves.
In fact, by the proof of \cite[Theorem A.1.(a)]{MR1660259},
there exists an integer $k >0$ and $D \in|kL|$ such that
any curve satisfying  $\varepsilon_{C} (L) < {\sqrt{L^2}} $ is 
an irreducible component of $D$ by \cite[Lemma 5.2]{MR1678549}.
However, \pref{pr:submaximal2} implies that the Seshadri curve is unique and elliptic under the assumption \pref{eq:intro2dima}.
\end{remark}

\begin{proposition}\label{pr:sumaximal3}
Let $(A,L)$ be a polarized abelian threefold. For any $a \in \bR_{>0}$,
if there exist at least two abelian divisors in $\cD _{a}$, then there is at most only one curve in $\cC_{a}$ and it is an elliptic curve.
Moreover, if one can take $a \ge ({\sqrt[3]{3})}^2$, 
there exists exactly one curve in $\cC_{a}$.
\end{proposition}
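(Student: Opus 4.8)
The plan is to exploit the containment \pref{eq:containing}, which follows from \pref{pr:main2}. Suppose that $D_1, D_2 \in \cD_a$ are two distinct abelian divisors of the threefold $A$. Then by \pref{eq:containing} every curve $C \in \cC_a$ is contained in $D_1 \cap D_2$, so the first task is to identify this intersection. Since $D_1 + D_2$ is an abelian subvariety of $A$ that is strictly larger than the codimension-one subvariety $D_1$, we must have $D_1 + D_2 = A$; the dimension formula $\dim(D_1 \times D_2) = \dim(D_1 + D_2) + \dim(D_1 \cap D_2)$ for abelian subvarieties then forces $\dim(D_1 \cap D_2) = 2 \cdot 2 - 3 = 1$. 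As we work in characteristic zero, $D_1 \cap D_2$ is a smooth closed subgroup scheme, and so its identity component $E \coloneqq (D_1 \cap D_2)^{\circ}$ is a one-dimensional abelian subvariety, i.e.\ an elliptic curve (containing $0$).

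Next I would observe that any $C \in \cC_a$ passes through $0$ (a curve avoiding $0$ has $\mult_0 C = 0$, hence does not satisfy \pref{eq:nounou}), is irreducible, and is contained in $D_1 \cap D_2$; being connected and containing $0$, it therefore lies in the connected component of $D_1 \cap D_2$ through the identity, which is exactly $E$. As $C$ and $E$ are both integral of dimension one, $C = E$. This already establishes the first two assertions: $\cC_a$ contains at most one curve, and if it is nonempty that curve is the elliptic curve $E$.

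For the remaining assertion I take $a = 3$ and must show $\cC_3 \neq \emptyset$; since $\cC_3$ contains every submaximal curve, it is enough to produce one submaximal curve. Here I would apply \pref{cr:reduction} to $D_1$ --- whose hypothesis $\sqrt[3]{L^3} > \sqrt{3 (L|_{D_1})^2}$ is precisely the condition $D_1 \in \cD_3$ --- to obtain $\varepsilon(A,L) = \varepsilon(D_1, L|_{D_1})$. Combining this with the upper bound \pref{eq:upper} on the surface $D_1$ and with the rearranged inequality $(L|_{D_1})^2 < (L^3)^{2/3} / 3$, one gets
\begin{align}
\varepsilon(A,L) = \varepsilon(D_1, L|_{D_1}) \leq \sqrt{(L|_{D_1})^2} < \frac{\sqrt[3]{L^3}}{\sqrt{3}} < \sqrt[3]{L^3} = \frac{3 \sqrt[3]{L^3}}{3}.
\end{align}
By the definition of the Seshadri constant as an infimum there is then a curve $C$ with $\varepsilon_C(L) < 3\sqrt[3]{L^3}/3$, i.e.\ $C \in \cC_3$; by the previous step $C = E$, so $\cC_3 = \{E\}$. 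Finally, any Seshadri curve $C'$ has $\varepsilon_{C'}(L) = \varepsilon(A,L) < \sqrt[3]{L^3}$, hence $C' \in \cC_3 = \{E\}$, and letting curves approach the infimum $\varepsilon(A,L)$ shows $\varepsilon_E(L) = \varepsilon(A,L)$; thus $E$ is the unique Seshadri curve with respect to $L$.

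I expect the real content to lie in the existence statement $\cC_3 \neq \emptyset$; the assertions that $\cC_a$ has at most one element and that it is elliptic are formal consequences of \pref{eq:containing} together with the dimension count for $D_1 \cap D_2$. Existence is where having two divisors genuinely enters --- through \pref{cr:reduction} as above, or alternatively by chaining \pref{lm:ampleness} on $(A,L)$ along $D_2$ with \pref{lm:ampleness} on $(D_1, L|_{D_1})$ along $E$ to bound $L.E$ directly (using that $D_2|_{D_1}$ is numerically $qE$ on $D_1$ for some integer $q \geq 1$, whence $\sigma(L|_{D_1},E) \geq q\,\sigma(L,D_2)$). A minor point to handle carefully is that $D_1 \cap D_2$ should be given its reduced --- in characteristic zero automatically smooth --- structure, so that the component through $0$ is the elliptic curve $E$ itself rather than a thickening of it.
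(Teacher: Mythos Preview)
Your argument is correct, and the first half---uniqueness and ellipticity via the identity component of $D_1 \cap D_2$ together with \pref{eq:containing}---is exactly the paper's approach (the paper also takes the reduced structure on $D_1 \cap D_2$ and passes to the identity component).

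For the existence step ($\cC_3 \neq \emptyset$) your route is a little more direct than the paper's. The paper argues by contradiction: assuming $\cC_3 = \emptyset$ forces $\varepsilon(A,L) = \sqrt[3]{L^3}$, and together with \pref{cr:reduction} this gives $\varepsilon(D,L|_D) = \sqrt[3]{L^3}$; the paper then splits into the two cases $\varepsilon(D,L|_D) < \sqrt{(L|_D)^2}$ (where it invokes the existence of a Seshadri curve on the surface $D$, which would land in $\cC_3$) and $\varepsilon(D,L|_D) = \sqrt{(L|_D)^2}$ (which contradicts $D \in \cD_3$). You bypass this case analysis by simply chaining \pref{cr:reduction} with the upper bound \pref{eq:upper} on $D_1$ and the inequality defining $\cD_3$, obtaining
\[
\varepsilon(A,L) = \varepsilon(D_1,L|_{D_1}) \le \sqrt{(L|_{D_1})^2} < \sqrt[3]{L^3}
\]
in one line; this yields a curve in $\cC_3$ without appealing to the surface Seshadri-curve existence result. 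Both arguments use only one of the two divisors at this stage; yours is cleaner. Your concluding remark that the resulting $E$ is the unique Seshadri curve is also a correct unpacking of the paper's ``in particular'' clause.
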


\begin{proof}
Let $D_1$ and $D_2$ be different abelian divisors in $\cD _{a}$.
Then $D_1 \cap D_2$ with induced reduced structure
is a reduced algebraic group of dimension one. 
Hence the identity component is an elliptic curve.
Therefore we obtain the assertion since any curve in $\cC_{a}$ is contained in
the identity component of $D_1 \cap D_2$ by \pref{eq:containing}.
For the latter part, it is sufficient to show $\cC_{a} \neq \emptyset$.
However, this follows from \pref{lm:main2}.
\end{proof}

Now let us prove \pref{cr:main3}. For the proof, we use the following fact from \cite[Theorem A.1.(b)]{MR1660259} for abelian surfaces.

\begin{lemma}[{$=$\cite[Theorem A.1.(b)]{MR1660259}}] \label{lm:surface}
Let $(S,L)$ be a polarized abelian surface.
Then one has a lower bound
\begin{align}
\varepsilon(S,L) \ge \min \biggl\{ \varepsilon_0,  \frac {\sqrt{14 L^2 }}{4}  \biggr\},
\end{align}
where $\varepsilon_0$ is the minimal degree of the elliptic curves in $S$ with respect to $L$.
\end{lemma}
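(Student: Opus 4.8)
The plan is to argue by contradiction, using the existence of a Seshadri curve on a surface together with adjunction and the Hodge index theorem; the one genuinely abelian ingredient will be a lower bound on the genus of a curve that generates $S$. Suppose $\varepsilon(S,L) < \min\{\varepsilon_0, \tfrac{\sqrt{14L^2}}{4}\}$ (with the convention $\varepsilon_0 = \infty$ if $S$ contains no elliptic curve). Since $\sqrt{14}/4 < 1$ we then have $\varepsilon(S,L) < \sqrt{L^2}$, so by the remark recalled in \pref{sc:preliminary} there is a Seshadri curve $C$ at $0$; write $m \coloneqq \mult_0 C$, so that $\varepsilon(S,L) = (L.C)/m$. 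Let $\nu \colon \tilde{C} \to C \subset S$ be the normalization and $g$ the genus of $\tilde{C}$. As $S$ contains no rational curves, $g \ge 1$, and I would split according to whether $g = 1$ or $g \ge 2$.

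If $g = 1$, then after fixing a base point the map $\nu$ factors through the Jacobian $\mathrm{Jac}(\tilde{C})$, whose image in $S$ is a one-dimensional abelian subvariety; hence $C$ is a translate of an elliptic curve, in particular smooth, so $m = 1$ and $\varepsilon(S,L) = L.C = \deg_L C \ge \varepsilon_0$, contradicting the assumption. So I may assume $g \ge 2$; equivalently, $C$ generates $S$, which forces the induced morphism $\mathrm{Jac}(\tilde{C}) \to S$ to be surjective and hence $g \ge \dim S = 2$. Now I would apply adjunction on $S$, where $K_S = 0$, together with the standard lower bound $\delta_0 \ge \binom{m}{2}$ for the delta invariant of a point of multiplicity $m$:
\[
C^2 = 2 p_a(C) - 2, \qquad p_a(C) = g + \sum_{p} \delta_p \ge 2 + \binom{m}{2},
\]
which yields $C^2 \ge m^2 - m + 2$.

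Finally, the Hodge index theorem gives $(L.C)^2 \ge L^2 \cdot C^2$, so
\[
\varepsilon(S,L)^2 = \frac{(L.C)^2}{m^2} \ge \frac{L^2(m^2 - m + 2)}{m^2} = L^2\Bigl(1 - \tfrac{1}{m} + \tfrac{2}{m^2}\Bigr) \ge \tfrac{7}{8}L^2,
\]
the last inequality being equivalent to $(m-4)^2 \ge 0$. Hence $\varepsilon(S,L) \ge \sqrt{14L^2}/4$, a contradiction; note that the extremal case $m = 4$ (with $g = 2$, $C^2 = 14$) is exactly what produces the constant $\sqrt{14}/4$. The hard part will be the conceptual core of the $g \ge 2$ step, namely the genus bound $g \ge \dim S$ for a generating curve via the Albanese/Jacobian factorization, and the accompanying technical point of keeping the estimate $\delta_0 \ge \binom{m}{2}$ valid when the singularity of $C$ at $0$ is analytically reducible; I would also need to invoke carefully the existence of an honest Seshadri curve $C$ (rather than an approximating sequence), since the entire argument rests on having a genuine curve computing $\varepsilon(S,L)$.
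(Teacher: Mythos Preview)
The paper does not supply its own proof of this lemma: it is quoted verbatim as \cite[Theorem A.1.(b)]{MR1660259} and used as a black box. So there is nothing in the present paper to compare against; one can only ask whether your argument is correct and how it relates to Bauer's original one.

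Your argument is correct and is, in fact, essentially Bauer's proof. The chain
\[
\text{Seshadri curve exists} \ \Rightarrow\ g\ge 1 \ \Rightarrow\ \text{elliptic case or }g\ge 2 \ \Rightarrow\ C^2\ge m^2-m+2 \ \Rightarrow\ \text{Hodge index} \ \Rightarrow\ \min_m \frac{m^2-m+2}{m^2}=\tfrac{7}{8}
\]
is exactly the skeleton of \cite[Theorem A.1.(b)]{MR1660259}. A couple of minor remarks: your worry about the estimate $\delta_0\ge\binom{m}{2}$ for analytically reducible singularities is unfounded, since this bound holds for an arbitrary plane curve singularity of multiplicity $m$ (e.g.\ by one blow-up and the genus drop formula); and in the $g=1$ case you may note that $C$ actually passes through $0$, so it is an abelian subvariety rather than merely a translate, which makes the comparison with $\varepsilon_0$ immediate. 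The passage ``equivalently, $C$ generates $S$, which forces $g\ge\dim S=2$'' is logically redundant once you have already assumed $g\ge 2$, but it does no harm.
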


Then \pref{th:low_seshadri}, \pref{th:main2} and \pref{lm:surface} imply the following corollary.

\begin{corollary}\label{cr:main1}
Let $(A,L)$ be a polarized abelian threefold.
Assume that $  \varepsilon (A,L) < \sqrt[3]{L^3} /3  $.
\begin{enumerate}
\item 
If there exists an abelian surface $S$ which satisfies
\begin{align}\label{eq:assumption2}
 \sqrt[3]{L^3} > \frac {3 \sqrt{14 (L|_S)^2}}{4},
\end{align}
then $\varepsilon (A,L) = \varepsilon (S,L| _S)$.

\item Otherwise, $\varepsilon (A,L)$ is computed by an elliptic curve.
\end{enumerate}
%
\end{corollary}

\begin{proof}
First, we prove $(2)$.
By \pref{th:low_seshadri}, we may assume that there exists an abelian surface $S'$ such that 
\begin{align}
 \varepsilon(S', L|_{S'}) = \varepsilon(A,L) < \frac{\sqrt[3]{L^3}}{3} 
 \le \frac{\sqrt{14(L|_{S'})^2}}{4}.
\end{align}
Hence, applying \pref{lm:surface}, $\varepsilon(A,L)$ is the minimal degree of the elliptic curves in $S'$ with respect to $L$.

On the other hand, the assumption of $(1)$ implies
\begin{align}
\sqrt[3]{L^3} > \frac {3 \sqrt{14 (L|_S)^2}}{4}.
\end{align}
Then we conclude the proof by \pref{th:main2}.
\end{proof}

Then \pref{cr:main3} follows easily from \pref{cr:main1}.

\begin{proof}[Proof of \pref{cr:main3}]
Note that an ample line bundle on an abelian surface has a positive and even degree.
Since $L^3 \le 174 $, if there exists an abelian surface $S$ such that
\begin{align}\label{eq:vv}
\sqrt[3]{L^3} > \frac {3 \sqrt{14 (L|_S) ^2 }}{4},
\end{align}
it must be $(L|_S )^2$ = 2.
Hence, by \pref{cr:main1}, it follows that either 
\begin{enumerate}
\item $\varepsilon(A,L)$ is computed by an elliptic curve $C$, or
\item $A$ contains a principally polarized abelian surface $(S, L|_S)$ and
$\varepsilon (A,L) = \varepsilon (S,L|_S)$.
\end{enumerate}
If the first case occurs, we obtain $C.L= \varepsilon(A,L) = 1 $.
Moreover, $A$ is isomorphic to $C \times B$ for some abelian surface $B$ by
\cite[Lemma 1]{Debarre:2007aa}.
In the second case, again by \cite[Lemma 1]{Debarre:2007aa},
it follows that there exists an elliptic curve $E$ such that
$A \simeq S \times E$.
Furthermore, 
$\varepsilon(A,L) = 1$ or $4/3$ since it is known that a principally polarized abelian surface 
is isomorphic to either
the Jacobian variety of a genus two curve with the Theta divisor
or
the product of two elliptic curves with the product of line bundles of degree one.
In the case $\varepsilon(A,L) = 4/3$, then $S$ is the Jacobian variety of a genus two curve and any curve satisfying $\varepsilon_C (L) < 21 \sqrt[3]{L^3} / 8$ is contained in $S$ by \pref{eq:vv} and \pref{pr:main2}.

Finally, note that $\varepsilon(A,L) = 1$ if $ L^3 \le 60$ since $\sqrt[3]{60}/3 < 3/4$.
\end{proof}

\begin{remark}
It is known that any polarized abelian surface $(S,L)$ satisfies $\varepsilon(S,L) \ge 4/3$ if $\varepsilon(S,L)$ is not one (see \cite[Theorem 1.2]{MR1393263}).
Hence, in fact, the second assertion in \pref{cr:main3} can be proven also directly 
from Nakamaye's \cite[Lemma $3.3$]{MR1393263} or \pref{th:low_seshadri}.
Indeed, if $L^3 \le 60$, then $ \varepsilon (A,L) < \sqrt[3]{L^3} /3 $ implies 
$
\varepsilon (A,L) < 4/3.
$
However, \pref{th:low_seshadri} implies that 
$ \varepsilon (B,L|_B) =\varepsilon (A,L) < 4/3 $
where $B$ is an abelian surface or an elliptic curve.
Hence $\varepsilon (A,L)$ must be one.
\end{remark}

In \pref{cr:main3}, the assumption $ L^3 \le 60$ is optimal for $\varepsilon(A,L)$ to be one.
In the following example, we construct polarized abelian threefolds $(A,L)$ satisfying 
$\varepsilon(A,L) < \sqrt[3]{L^3}/ 3$, $L^3 =n$, and $\varepsilon(A,L) \neq 1$ for any $n \in 6 \bZ $ such that $n > 60$.


\begin{example}\label{eg:3-dim}
Let $(J,\theta)$ be a pair of the Jacobian variety of a genus two curve and its Theta divisor.
Then it follows that $\varepsilon (J,\theta) = 4/3$ by \cite[Proposition 2]{MR1612681}.
Assume $(E,M_k)$ is a polarized elliptic curve with  $\deg M_k =k >0$.
Consider $A \coloneqq J \times E$ and the ample line bundle
$ L_k \coloneqq pr_1 ^* \theta \otimes pr_2 ^* M_k$ on it.
Then straightforward computation implies
$ 
L _k ^3 = 6 k.
$
Then we obtain 
\begin{align}
\varepsilon( A, L_k ) = \min \{ \varepsilon (J,\theta), \varepsilon(E,M_k) \} 
= \min \{ 4/3, k \}.
\end{align}
Hence, if $ 2 \le  k \le 10$, we obtain $\varepsilon(A ,L_k) = 4/3$ and $\varepsilon (A,L_k) > \sqrt[3]{L_k ^3} /3$.
If $ 11 \le k $, we have $\varepsilon(A ,L_k) = 4/3$, $L^3 = 6k > 60$ and 
$\varepsilon (A,L_k) < \sqrt[3]{L_k ^3} /3$.

\end{example}

\bibliographystyle{amsalpha}
\bibliography{bib}

\providecommand{\bysame}{\leavevmode\hbox to3em{\hrulefill}\thinspace}
\providecommand{\MR}{\relax\ifhmode\unskip\space\fi MR }
\providecommand{\MRhref}[2]{%
  \href{http://www.ams.org/mathscinet-getitem?mr=#1}{#2}
}
\providecommand{\href}[2]{#2}
\begin{thebibliography}{BGS18}

\bibitem[Bau98]{MR1660259}
Thomas Bauer, \emph{Seshadri constants and periods of polarized abelian
  varieties}, Math. Ann. \textbf{312} (1998), no.~4, 607--623, With an appendix
  by the author and Tomasz Szemberg. \MR{1660259}

\bibitem[Bau99]{MR1678549}
\bysame, \emph{Seshadri constants on algebraic surfaces}, Math. Ann.
  \textbf{313} (1999), no.~3, 547--583. \MR{1678549}

\bibitem[Bau08]{MR2390296}
\bysame, \emph{A criterion for an abelian variety to be simple}, Arch. Math.
  (Basel) \textbf{90} (2008), no.~4, 317--321. \MR{2390296}

\bibitem[BGS18]{intsehadri}
Thomas Bauer, Felix~Fritz Grimm, and Maximilian Schmidt, \emph{On the
  integrality of seshadri constants of abelian surfaces}, arXiv:1805.05413
  (2018).

\bibitem[BS01]{MR1810121}
Thomas Bauer and Tomasz Szemberg, \emph{Local positivity of principally
  polarized abelian threefolds}, J. Reine Angew. Math. \textbf{531} (2001),
  191--200. \MR{1810121}

\bibitem[BS08]{MR1660260}
Thomas Bauer and Christoph Schulz, \emph{Seshadri constants on the self-product
  of an elliptic curve}, J. Algebra \textbf{320} (2008), no.~7, 2981--3005.
  \MR{2442006}

\bibitem[Deb04]{MR2112583}
Olivier Debarre, \emph{Seshadri constants of abelian varieties}, The {F}ano
  {C}onference, Univ. Torino, Turin, 2004, pp.~379--394. \MR{2112583}

\bibitem[Dem92]{MR1178721}
Jean-Pierre Demailly, \emph{Singular {H}ermitian metrics on positive line
  bundles}, Complex algebraic varieties ({B}ayreuth, 1990), Lecture Notes in
  Math., vol. 1507, Springer, Berlin, 1992, pp.~87--104. \MR{1178721}

\bibitem[DH07]{Debarre:2007aa}
Olivier Debarre and Christopher~D. Hacon, \emph{Singularities of divisors of
  low degree on abelian varieties}, manuscripta mathematica \textbf{122}
  (2007), no.~2, 217--228.

\bibitem[Kon03]{MR1974680}
Jian Kong, \emph{Seshadri constants on {J}acobian of curves}, Trans. Amer.
  Math. Soc. \textbf{355} (2003), no.~8, 3175--3180. \MR{1974680}

\bibitem[Laz96]{MR1406008}
Robert Lazarsfeld, \emph{Lengths of periods and {S}eshadri constants of abelian
  varieties}, Math. Res. Lett. \textbf{3} (1996), no.~4, 439--447. \MR{1406008}

\bibitem[Laz04]{MR2095471}
\bysame, \emph{Positivity in algebraic geometry. {I}}, Ergebnisse der
  Mathematik und ihrer Grenzgebiete. 3. Folge. A Series of Modern Surveys in
  Mathematics [Results in Mathematics and Related Areas. 3rd Series. A Series
  of Modern Surveys in Mathematics], vol.~48, Springer-Verlag, Berlin, 2004,
  Classical setting: line bundles and linear series. \MR{2095471}

\bibitem[LOZ96]{MR1378542}
Hendrik~W. Lenstra, Jr., Frans Oort, and Yuri~G. Zarhin, \emph{Abelian
  subvarieties}, J. Algebra \textbf{180} (1996), no.~2, 513--516. \MR{1378542}

\bibitem[Loz18]{Lozovanu:aa}
Victor Lozovanu, \emph{Singular divisors and syzygies of polarized abelian
  threefolds}, arXiv preprint arXiv:1803.08780v2 (2018).

\bibitem[Mil86]{MR861974}
J.~S. Milne, \emph{Abelian varieties}, Arithmetic geometry ({S}torrs, {C}onn.,
  1984), Springer, New York, 1986, pp.~103--150. \MR{861974}

\bibitem[Nak96]{MR1393263}
Michael Nakamaye, \emph{Seshadri constants on abelian varieties}, Amer. J.
  Math. \textbf{118} (1996), no.~3, 621--635. \MR{1393263}

\bibitem[Ste98]{MR1612681}
A.~Steffens, \emph{Remarks on {S}eshadri constants}, Math. Z. \textbf{227}
  (1998), no.~3, 505--510. \MR{1612681}

\end{thebibliography}

\end{document}